\newcommand{\R}{\mathbb{R}}
\begin{document}

\mainmatter  

\title{Efficient Osmosis Filtering of Thermal-Quasi Reflectography Images for Cultural Heritage}
\titlerunning{Efficient Osmosis Filtering of TQR Images for Cultural Heritage}

%
%
\author{Simone Parisotto\inst{1} \and Luca Calatroni\inst{2} \and Claudia Daffara\inst{3}}
\authorrunning{S. Parisotto, L. Calatroni \and C. Daffara}


\institute{
CCA, Wilberforce Road, CB3 0WA, University of Cambridge, UK\\
\email{sp751@cam.ac.uk}
\and
CMAP, Ecole Polytechnique 91128 Palaiseau Cedex, France\\
\email{luca.calatroni@polytechnique.edu},
\and
University of Verona, Strada Le Grazie 15, 37134 Verona, Italy\\
\email{claudia.daffara@univr.it}
}

%
%

\toctitle{Lecture Notes in Computer Science}
\tocauthor{Authors' Instructions}
\maketitle

\begin{abstract}
In Cultural Heritage, non-invasive infrared imaging techniques are used to analyse portions of deep structures behind wall paintings. When mosaicked, these images usually suffer from light inhomogeneities due to the experimental setup, which may prevent restorers from distinguishing the physical properties of the object under restoration. A light-balanced image is therefore essential for inter-frame comparisons, while preserving intra-frames details. In this paper, we apply the image osmosis model proposed in \cite{weickert} to solve the light balance problem in Thermal-Quasi Reflectography (TQR) imaging. Due to the large amount of image data, the computation of the numerical solution of the model may be prohibitively costly. To overcome this issue, we make use of efficient operator splitting techniques. We test the proposed numerical schemes on the TQR measurement dataset of the mural painting ``Monocromo''  by Leonardo Da Vinci at Castello Sforzesco (Milan, Italy). The light corrected result is registered to a visible orthophoto, which makes it re-usable for further restorations.

\keywords{Osmosis Filtering; Operator Splitting; Thermal-Quasi Reflectography; Cultural Heritage Conservation.}
\end{abstract}

\section{Introduction}
Non-destructive optical infrared imaging techniques are widely used for the scientific study of wall paintings since different spectral bands unveil hidden information about surface and sub-surface layers. For instance, near-infrared reflectography (from \SIrange{0.8}{2.5}{\micro\meter}) is suitable for analysing paintings on canvas and panels, revealing underlying details, preparatory drawings and repainting, while far-infrared thermography (from \SIrange{8}{15}{\micro\meter}) emphasises the underlying deep structures rather than superficial defects. However, traditional reflectography techniques are less effective on wall paintings due to the specific optical properties of the pictorial layers and the plaster support. Recently, thermal mid-infrared band (from \SIrange{3}{5}{\micro\meter}) in reflectance mode has been a powerful tool for analysing pictorial layers of frescoes, undetectable with traditional infrared methods: such imaging technique is called Thermal-Quasi Reflectography (TQR) \cite{Daffara2012}. For very large wall paintings, many TQR imaging shoots need to be taken, each one highly depending on a specific illumination setup.
Due to the fragility of the artworks and to the nature of the materials,  the ideal tuning of the thermal stimulus is often not possible. As a consequence, light inhomogeneities in the TQR mosaic may occur. This unbalance makes the detection of wall materials and the discrimination of their different response very challenging.  Moreover, a light-balanced version of the TQR mosaic is desirable to compare the present status of the wall painting with past and future restorations. 

In order to correct the inter-frame light coefficient, we consider in this paper a model described by a parabolic Partial Differential Equation (PDE), a classical tool in mathematical image processing used to solve several imaging problems such as deblurring, denoising and inpainting (see, e.g.\ \cite{aubert} for a survey).  In particular, following \cite{weickert,vogel,Calatroni2017}, we consider a drift-diffusion PDE  which is the non-symmetric counterpart of classical diffusion processes and which allows for non-trivial steady states. due to its similarities to the analogous physical phenomenon, such model is called \emph{image osmosis}. 

In this work, we discuss the use of a linear image osmosis model as an unsupervised mathematical tool to correct the light differences in TQR frames, preserving intra-frame details at the same time.  Similarly to our previous work \cite{Calatroni2017}, our numerical implementation relies on the use of mathematical operator splitting techniques considered, e.g., in \cite{Barash2001,hundsbook,CalatroniADI} and here designed to be used for possibly very large TQR images. Numerical methods are validated to be efficient and accurate on a TQR mosaic (greater than 28Mpx) of the ``Monocromo" by Leonardo Da Vinci (Castello Sforzesco, Milan, Italy) \cite{Daffara2015}. 

\paragraph{Structure of the Paper.} 
In Section \ref{sec: tqr} we recall the principles of TQR imaging. In Section \ref{sec: linear_osmosis} the linear osmosis model proposed in \cite{vogel,weickert} is described. In Section \ref{sec: adi osmosis} we discuss classical operator splitting techniques and their use as time-discretisation schemes for the osmosis problem. Similarly as in \cite{Calatroni2017}, we prove that such schemes preserve fundamental properties of the continuous osmosis model and can be used as efficient numerical solvers for very large images. In Section \ref{sec: numerical} we solve the light balance problem for a specific TQR case study.

\section{Thermal-Quasi Reflectography}\label{sec: tqr}
Thermal-Quasi Reflectography (TQR) is a modern mid-infrared imaging technique for the analysis of wall painting \cite{Daffara2012} which successfully detects details that standard reflectography methods are not able to measure. The core idea of TQR imaging is the acquisition of the radiation reflected in the mid-infrared thermal range (MIR), from \SIrange{3}{5}{\micro\meter}, which is closely correlated with the material properties on the surface layer.  In standard infrared non-destructive analysis, a thermal camera is used in ``emissive'' modality, recording the electromagnetic radiation emitted by the object to extrapolate the temperature distribution on the surface (thermogram). In this context, the object may be possibly exposed to a thermal stimulus using active or passive thermography techniques. In the case of wall paintings, such approach can only study the deep wall structures and the ``defects'' in the paint layers.
In TQR imaging, the concept of thermography is turned upside down: it is a \emph{quasi-reflectography} technique, where the radiation emitted by the sample is ``rejected'' so as to measure the thermal radiation reflected. 
The key observation at the core of TQR modality is  that an object with constant emissivity and a surface temperature of \SI{293}{\kelvin} emits only $1.1\%$ of its thermal energy in the MIR. This follows by the Plank spectral exitance $M_\lambda$ which quantifies the radiant flux emitted per unit area and wavelength $[\SI{}{\watt\meter^{-3}}]$ via the relation
\begin{equation}   \label{planck}
M_\lambda= C_1 \lambda^{-5} ( e^{C_2 / \lambda T}-1)^{-1},
\end{equation}
where $C_1,\,C_2$ are the radiation constants. 
By sending a thermal stimulus in the MIR and limiting the surface heating, a signal dominated by the reflected thermal radiation can then be acquired. The MIR reflectance $r$, defined as the ratio of the incident and reflected radiant flux in the MIR, can then obtained point-wise from the acquired TQR radiometric image $u$ using an in-scene calibration target~by the relation
\begin{equation}   \label{Reflectance}
u_{ij} = r_{ij} u_{\mathrm{ref}} r_{\mathrm{ref}}^{-1},
\end{equation}
where $r_\mathrm {ref}$ is the certified reflectance of the reference target and $u_{\mathrm{ref}}$ its averaged radiometric response. 
The calibration target is typically a gold material with Lambertian surface, highly reflective in the MIR. 
An important requirement of the experimental setup should be a uniform radiation source. Furthermore, a variation of the surface temperature increases the emitted radiance, therefore the peak of the Planck spectral distribution \eqref{planck} is shifted towards the MIR. 
Since the MIR reflectance is strictly correlated to the absorption bands of the materials and to the morphology of the surface (micro-roughness), the TQR technique allows the discrimination of different kind of surface materials, e.g.\ highly-reflective pigments, absorbing organic binders or decay products such as salts. 
After TQR acquisition, a MIR reflectogram is produced for mapping different characteristics connected to original materials such as the restoration materials and degradation of wall paintings, even on very extensive areas.
In \cite{Daffara2015}, a non-invasive mid-infrared multi-modal imaging tool has been introduced to effectively detect voids and sub-superficial damages.

As a result of the calibration set up, a non-uniform illumination often affects the mosaic of the TQR frames. For restoration purposes, a balanced version of such mosaic is desirable to have a comprehensive idea of the wall composition and to compare the present status of the wall painting with past restorations. In the following, we consider a mathematical model considered in \cite{weickert} to uniform the
inter-frame light coefficient.

\section{Image Osmosis Filtering} \label{sec: linear_osmosis}
For a regular image domain $\Omega \subset\R^2$ with boundary $\partial \Omega$ and given a vector field $\bm{d}:\Omega\to\R^2$, the linear image osmosis model considered in \cite{vogel,weickert} is a drift-diffusion PDE which computes for every $t\in (0,T],~ T>0$ a family $\left\{u(x,t)\right\}_{t>0}$ of regularised images of a positive initial gray-scale image $f:\Omega\to\R^+$ by solving:
\begin{equation} \label{eq:osmosis}
\begin{cases}
\partial_t u = \Delta u - \text{div}(\bm{d}u) & \text{ on } \Omega \times (0,T] \\
u(x,0)=f(x) & \text{ on } \Omega \\
\langle \nabla u - \bm{d}u, \bm{n} \rangle = 0 & \text{ on } \partial \Omega \times (0, T],
\end{cases}
\end{equation}
where $\langle \cdot, \cdot \rangle$ is the Euclidean scalar product and $\bm{n}$ the outer normal vector on $\partial\Omega$. As proved in \cite[Proposition 1]{weickert}, any solution of \eqref{eq:osmosis} preserves the average gray value (AVG) of $f$ and it is non-negative at any time $t>0$. Furthermore, by setting $\bm{d}:=\ln(\bm{\nabla} v)$ for a given reference image $v>0$, the steady state equation
\begin{equation}   \label{eq:stationary}
\Delta u - \text{div}(\bm{d}u)= 0
\end{equation}
is solved by $w(x)=\frac{\mu_f}{\mu_v}~v(x)$, where $\mu_f$ and $\mu_v$ are the average of $f$ and $v$ over $\Omega$, respectively. In other words, the steady state $w$ of \eqref{eq:osmosis} is a rescaled version of $v$. Such property makes the osmosis model appealing for light-balance applications such as shadow-removal where one wants to uniform the image brightness without losing the initial given information encoded in $f$. In order to avoid the dependence on $v$ (which is not available in practice), the vector field can be set to zero on the shadow boundary and $\bm{d}=\nabla(\ln f)$ everywhere else, see \cite{weickert}. The osmosis evolution compensates for such discontinuity and converges to a steady state which is a rescaled version of $f$ and where the shadow has been removed.


In \cite{vogel}, a fully discrete theory for the model \eqref{eq:osmosis} is studied. For a given  $\bm{f}\in\R^N_+$, a spatial finite-difference discretisation of the differential operators in \eqref{eq:osmosis} reads
\begin{equation}   \label{eq:semi_discrete}
\bm{u}(0) = \bm{f}, \qquad \bm{u}'(t) =\bm{A} \bm{u}(t),\qquad t>0.
\end{equation}
Equivalently, denoting by $u_{i,j}$ an approximation of $u$ in the grid of size $h$ at point $((i-\frac{1}{2})h, (j-\frac{1}{2})h)$, we can write \eqref{eq:semi_discrete} element-wise as
\begin{align}  \label{eq:discr_operators} 
u'_{i,j} = \bm{A}\bm{u}= &  u_{i,j} \bigg( -\frac{4}{h^2}+\frac{2d_{1,i-\frac{1}{2},j}}{h}-\frac{2d_{1,i+\frac{1}{2}}}{h} +\frac{2d_{2,i,j-\frac{1}{2}}}{h}-\frac{2d_{2,i,j+\frac{1}{2}}}{h} \bigg) \notag \\
&+  u_{i+1,j} \bigg( \frac{1}{h^2}-\frac{d_{1,i+\frac{1}{2},j}}{2h} \bigg)+ u_{i-1,j} \bigg( \frac{1}{h^2}+\frac{d_{1,i-\frac{1}{2},j}}{2h} \bigg)   \\
& + u_{i,j+1} \bigg( \dfrac{1}{h^2}-\dfrac{d_{2,i,j+\frac{1}{2}}}{2h} \bigg)+
u_{i,j-1} \bigg( \dfrac{1}{h^2}+\dfrac{d_{2,i,j-\frac{1}{2}}}{2h} \bigg). \notag
\end{align}
For the time discretisation of \eqref{eq:semi_discrete}, in \cite{vogel,weickert} the authors consider standard forward and backward Euler schemes and prove the validity of analogous conservation and convergence properties of the continuous model \eqref{eq:osmosis} in the discrete setting, \cite[Proposition 1]{vogel}. In particular, in the case of a fully implicit time-stepping unconditional stability is observed.  Computationally, iterative solvers (e.g., BiCGStab) are used to solve the resulting penta-diagonal and non-symmetric linear systems.

\section{Operator splitting schemes}\label{sec: adi osmosis}
The numerical solution the fully discretised osmosis model may become extremely costly for large images. In this section we recall the general framework of operator splitting methods and recall \emph{Alternating Direction Implicit} (ADI), additive and multiplicative splitting schemes used, e.g., for nonlinear diffusion imaging models in \cite{Barash2001,CalatroniADI}.

\subsection{Dimensional splitting schemes} \label{subsec:adi_split}
Given a domain $\Omega\in\R^s$, a \emph{dimensional splitting} method decomposes the space-discretised operator $\bm{A}$ of the initial boundary value problem \eqref{eq:semi_discrete} into the sum:
\begin{equation} \label{decomp}
\bm{A}=\bm{A_0}+\bm{A_1}+\ldots+\bm{A_s}
\end{equation}
where the terms $\bm{A_j}$, $j=1,\ldots, s,$ encode the linear action of $\bm{A}$ along the space direction $j=1,\ldots, s,$ respectively, while $\bm{A_0}$ may contain mixed-derivative contributes and non-stiff nonlinear terms. ADI time-stepping schemes treat the unidirectional components $\bm{A_j},~j\geq 1$ implicitly and the $\bm{A_0}$ component, if present, explicitly in time.  ADI schemes belong to the family of locally-one dimensional (LOD) splitting methods where \eqref{decomp} is used to reduce the $s$-dimensional original problem to $s$ one-dimensional problems. 
Having in mind a standard finite difference space discretisation, LOD methods consider tridiagonal matrices whose inversion can be rendered efficient via classical matrix factorisation techniques.

In our case $s=2$ and no mixed derivatives appear. The tridiagonal operators $\bm{A_1}$ and $\bm{A_2}$ can then be defined from \eqref{eq:discr_operators}  as:
\begin{align}  \label{eq:discr_operators_splitted}
\bm{A_1}\bm{u}&  = u_{i,j} \bigg( -\frac{2}{h^2}+\frac{d_{1,i-\frac{1}{2},j}}{h}-\frac{d_{1,i+\frac{1}{2}}}{h}+\frac{d_{2,i,j-\frac{1}{2}}}{h}-\frac{d_{2,i,j+\frac{1}{2}}}{h} \bigg) \notag \\
& +  u_{i+1,j} \bigg( \frac{1}{h^2}-\frac{d_{1,i+\frac{1}{2},j}}{2h} \bigg)+ u_{i-1,j} \bigg( \frac{1}{h^2}+\frac{d_{1,i-\frac{1}{2},j}}{2h} \bigg);\\
\bm{A_2}\bm{u}&  = u_{i,j} \bigg( -\frac{2}{h^2}+\frac{d_{1,i-\frac{1}{2},j}}{h}-\frac{d_{1,i+\frac{1}{2}}}{h}+\frac{d_{2,i,j-\frac{1}{2}}}{h}-\frac{d_{2,i,j+\frac{1}{2}}}{h} \bigg) \notag \\
 & + u_{i,j+1} \bigg( \dfrac{1}{h^2}-\dfrac{d_{2,i,j+\frac{1}{2}}}{2h} \bigg)+
u_{i,j-1} \bigg( \dfrac{1}{h^2}+\dfrac{d_{2,i,j-\frac{1}{2}}}{2h} \bigg). \notag
\end{align}
\paragraph{The Peaceman-Rachford ADI scheme.} For every $k\geq 0$ and time-step $\tau>0$, the scheme computes an approximation $\bm{u}^{k+1}$ via the updating rule:
\begin{equation}  \label{eq: peacemanrach}
\left\{\begin{aligned}
\bm{u}^{k+1/2}&=\bm{u}^{k}+\frac{\tau}{2}\bm{A_1u}^{k}+\frac{\tau}{2}\bm{A_2u}^{k+1/2},  \\
 \bm{u}^{k+1}&=\bm{u}^{k+1/2}+\frac{\tau}{2}\bm{A_1u}^{k+1}+\frac{\tau}{2}\bm{A_2u}^{k+1/2}.
\end{aligned}\right.  \tag{P-R}
\end{equation}
Note that at each time-step forward and backward Euler are applied alternatively, resulting in a second-order accurate in time semi-implicit scheme \cite{hundsbook}.  
The Peaceman-Rachford iteration \eqref{eq: peacemanrach} satisfies the osmosis conservation properties and converges to a unique steady state only for sufficiently small time steps $\tau<2\left(\max\left\{ \max |a^1_{i,i}| , \max |a^2_{i,i}| \right\}\right)^{-1}$, see \cite[Proposition 1]{Calatroni2017}.

\paragraph{Additive and Multiplicative Operator Splitting.} To achieve stability for every $\tau>0$, we consider here fully-implicit Additive (AOS) and Multiplicative (MOS) Operator Splitting schemes proposed in \cite{Weickert98,Barash2001} for nonlinear diffusion models.
For every $k\geq 0$ and time-step $\tau>0$, the AOS and MOS iterations read:
\begin{align}  
 \bm{u}^{k+1} &= \frac{1}{2}~\sum_{n=1}^2~\left(\bm{I} -2\tau \bm{A_n}\right)^{-1}\bm{u}^{k}, \tag{AOS} \label{eq:AOS} \\
 \bm{u}^{k+1} &= \prod_{n=1}^2~\left(\bm{I} -\tau \bm{A_n}\right)^{-1}\bm{u}^{k}, \tag{MOS}  \label{eq:MOS}
\end{align}
which are stable for any $\tau>0$  and first-accurate in time. In \cite{Barash2001} a more accurate additive-multiplicative (AMOS) combination of \eqref{eq:AOS} and \eqref{eq:MOS} is considered. There, the approximation $\bm{u}^{k+1}$ is computed via the updating rule:
\begin{equation}  \label{eq:AMOS}
 \bm{u}^{k+1}=\frac{1}{2} \sum_{n=1}^2\left( \left(\bm{I} -\tau \bm{A_{j_n}}\right)^{-1}\left(\bm{I} -\tau \bm{A_{i_n}}\right)^{-1}  \right) \bm{u}^{k},\tag{AMOS}
\end{equation}
with $\bm{i}=\{1,2\},\, \bm{j}=\{2,1\}$.
The tridiagonal structure of $\left(\bm{I} -\tau \bm{A_\ell}\right)^{-1}$, $\bm{\ell}=1,2$ allows for efficient inversion by means, for instance, of LU factorisation. 

The following Proposition follows \cite[Proposition 1]{Calatroni2017} and guarantees that the \ref{eq:AOS} and \ref{eq:MOS} iterations applied to the osmosis problem preserve the AVG, the positivity and converge to a unique steady state.

\begin{proposition}  \label{prop:aos_mos}
For an initial image  $\bm{f}\in\R^N_+$, the schemes \eqref{eq:AOS} and \eqref{eq:MOS} with $\bm{A_1}$ and $\bm{A_2}$ as in \eqref{eq:discr_operators_splitted} preserve the AVG, the positivity and converges to a unique steady state for any $\tau>0$.
\end{proposition}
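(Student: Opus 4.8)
The plan is to reduce everything to three structural facts about the one--dimensional operators $\bm{A_1},\bm{A_2}$ of \eqref{eq:discr_operators_splitted}, and then to invoke the Perron--Frobenius theorem for the full iteration matrix. First I would record that, being a conservative finite--difference discretisation, each $\bm{A_n}$ is block--tridiagonal with (i) vanishing column sums, i.e.\ $\bm{1}^\top \bm{A_n}=\bm{0}$ with $\bm{1}=(1,\dots,1)^\top$; (ii) non-positive diagonal entries; and (iii) non-negative off-diagonal entries, the latter holding whenever the off-diagonal weights $h^{-2}\pm d/(2h)$ are non-negative (a mild grid condition $h<2/\|\bm{d}\|_\infty$, automatic for the canonical osmosis stencil). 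Each diagonal block, one per grid line, is moreover irreducible.

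The second step analyses the resolvents. Fixing $\tau>0$ and $c\in\{1,2\}$, properties (ii)--(iii) show that $\bm{I}-c\tau\bm{A_n}$ has positive diagonal and non-positive off-diagonals, while (i) gives $\bm{1}^\top(\bm{I}-c\tau\bm{A_n})=\bm{1}^\top$, so its columns sum to $1$; together with the sign pattern this yields strict column diagonal dominance, hence $\bm{I}-c\tau\bm{A_n}$ is a non-singular M-matrix for \emph{every} $\tau>0$. Consequently $(\bm{I}-c\tau\bm{A_n})^{-1}\ge\bm{0}$ entrywise, positive on each irreducible block, and, inverting $\bm{1}^\top(\bm{I}-c\tau\bm{A_n})=\bm{1}^\top$, each factor $(\bm{I}-c\tau\bm{A_n})^{-1}$ is itself column-stochastic.

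From here the AVG and positivity claims are immediate. Propagating $\bm{1}^\top(\bm{I}-c\tau\bm{A_n})^{-1}=\bm{1}^\top$ through the convex combination in \eqref{eq:AOS} and through the product in \eqref{eq:MOS} gives $\bm{1}^\top\bm{u}^{k+1}=\bm{1}^\top\bm{u}^{k}$, so the average grey value is conserved; and since $\bm{u}^{k+1}$ is the image of $\bm{u}^{k}>\bm{0}$ under non-negative matrices that are positive on each irreducible block, it stays strictly positive. Crucially, none of this needs a bound on $\tau$, which is the gain over the Peaceman--Rachford scheme.

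The core of the argument, and the step I expect to be the main obstacle, is convergence to a \emph{unique} steady state. Writing the update as $\bm{u}^{k+1}=\bm{P}\bm{u}^{k}$ with $\bm{P}=\tfrac12\sum_{n}(\bm{I}-2\tau\bm{A_n})^{-1}$ for \eqref{eq:AOS} and $\bm{P}=\prod_{n}(\bm{I}-\tau\bm{A_n})^{-1}$ for \eqref{eq:MOS}, the previous step shows $\bm{P}\ge\bm{0}$ and $\bm{1}^\top\bm{P}=\bm{1}^\top$, so $\bm{P}$ is column-stochastic and $\rho(\bm{P})=1$. To conclude I would prove that $\bm{P}$ is primitive: for \eqref{eq:MOS} the product of the row--coupling resolvent with the column--coupling one is entrywise positive, while for \eqref{eq:AOS} the matrix is irreducible, its associated graph linking any two pixels through a shared row or column, and aperiodic by positivity of the diagonal. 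The Perron--Frobenius theorem then gives that $1$ is a simple eigenvalue strictly dominating all others in modulus, whence $\bm{P}^{k}\to\bm{v}\,\bm{1}^\top/(\bm{1}^\top\bm{v})$ and $\bm{u}^{k}\to(\bm{1}^\top\bm{f})\,\bm{v}/(\bm{1}^\top\bm{v})$, the unique positive steady state carrying the same AVG as $\bm{f}$. The delicate points are checking the M-matrix and column-sum properties on the boundary rows produced by the no-flux condition in \eqref{eq:osmosis}, and establishing primitivity in the AOS case, where $\bm{P}$ is only irreducible rather than strictly positive; both follow the template of \cite[Proposition 1]{Calatroni2017}.
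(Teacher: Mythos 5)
Your proof is correct and takes essentially the same route as the paper's: both arguments reduce the claim to verifying that the AOS/MOS iteration matrices are non-negative, column-stochastic and primitive (irreducible with positive diagonal entries) and then conclude by Perron--Frobenius theory, which is exactly the template of \cite[Proposition 1]{Calatroni2017} and \cite[Propositions 1 and 2]{vogel} that the paper invokes. The only difference is self-containedness: you derive in-line (via column diagonal dominance/M-matrix theory and the explicit primitivity checks) the structural facts that the paper imports by citation, and in doing so you are slightly more careful than the paper itself, which calls each resolvent $(\bm{I}-\tau\bm{A_i})^{-1}$ ``irreducible'' even though it is only block-diagonal with irreducible (hence entrywise positive) blocks --- the row/column coupling that yields irreducibility of the full iteration matrix appears only after summing (AOS) or multiplying (MOS) the two resolvents, as your argument makes explicit.
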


\begin{proof}
The proof is similar to \cite[Proposition 1]{Calatroni2017} and relies on \cite[Propositions 1 and 2]{vogel}. We first note that the operators $\bm{P}^-_i:=\left(\bm{I} -\tau \bm{A_i}\right)^{-1}, ~i=1,2$ are non-negative and irreducible with unitary column sum and positive diagonal entries being each $\bm{A_i}$ a  discretised implicit one-dimensional osmosis operator. Similarly, the operator $\bm{P}_{MOS}:= \bm{P}^-_1\bm{P}^-_2$ is trivially non-negative and irreducible, with unitary column sum by \cite[Lemma 1]{Calatroni2017} and with strictly positive elements on the diagonal, since for $i=1\ldots N$ we have
\[
p_{i,i} = \sum_{k=1}^N (\bm{P}^-_1)_{i,k} (\bm{P}^-_2)_{k,i} =  (\bm{P}^-_1)_{i,i} (\bm{P}^-_2)_{i,i} + \sum_{\substack{k=1,\, k\neq i}}^N (\bm{P}^-_1)_{i,k} (\bm{P}^-_2)_{k,i} >0.
\]
For every $k>0$ the AOS time-stepping  $ \bm{u}^{k+1} = \frac{1}{2}\left(\bm{P}_1^- + \bm{P}_2^-\right)\bm{u}^{k}$ and the MOS one $ \bm{u}^{k+1}= \bm{P}_{MOS} \bm{u}^{k} $ satisfy \cite[Proposition 1]{vogel}. Hence, their iterates $\bm{u}^{k+1}$ preserve the AVG and the positivity for any $\tau>0$. Finally, the unique steady state of \eqref{eq:AOS} and \eqref{eq:MOS} is given by the eigenvector associated to the eigenvalue one of the operator $\bm{P}_{AOS}:=\frac{1}{2}(\bm{P}_1^- + \bm{P}_2^-)$ and $\bm{P}_{MOS}$, respectively.\qed
\end{proof}


The same result holds trivially for \eqref{eq:AMOS} by combining \eqref{eq:AOS} and \eqref{eq:MOS}.
\begin{corollary}  \label{cor:amos}
For an initial image  $\bm{f}\in\R^N_+$, the \eqref{eq:AMOS} scheme with $\bm{A_1}$ and $\bm{A_2}$ as in \eqref{eq:discr_operators_splitted} preserve the AVG, the positivity and converges to a unique steady state for any $\tau>0$.
\end{corollary}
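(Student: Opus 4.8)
The plan is to reduce the statement entirely to Proposition~\ref{prop:aos_mos} by showing that the AMOS propagator inherits the four structural properties that drive the proof there, namely non-negativity, irreducibility, unitary column sum and strictly positive diagonal. Once these are in place, the Perron--Frobenius machinery of \cite[Propositions 1 and 2]{vogel} applies verbatim and delivers the three claimed conclusions at once.

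First I would set $\bm{P}^-_i:=(\bm{I}-\tau\bm{A_i})^{-1}$, $i=1,2$, and rewrite \eqref{eq:AMOS} as
\[
\bm{u}^{k+1}=\bm{P}_{AMOS}\,\bm{u}^{k},\qquad
\bm{P}_{AMOS}:=\tfrac{1}{2}\left(\bm{P}^-_1\bm{P}^-_2+\bm{P}^-_2\bm{P}^-_1\right).
\]
The key observation is that each of the two products $\bm{P}^-_1\bm{P}^-_2$ and $\bm{P}^-_2\bm{P}^-_1$ is a \eqref{eq:MOS}-type operator, the second one being simply the multiplicative splitting with the two spatial directions exchanged. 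Since the roles of $\bm{A_1}$ and $\bm{A_2}$ are interchangeable, the argument already carried out in the proof of Proposition~\ref{prop:aos_mos} shows, for \emph{both} orderings, that the product is non-negative, irreducible, has unitary column sum by \cite[Lemma 1]{Calatroni2017} and strictly positive diagonal entries.

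Next I would check that the convex combination with weights $\tfrac{1}{2},\tfrac{1}{2}$ preserves these four properties. Non-negativity is immediate; the column sum of $\bm{P}_{AMOS}$ is the average of two unit column sums, hence unitary; and the diagonal of $\bm{P}_{AMOS}$ is the half-sum of two strictly positive diagonals, hence strictly positive. With $\bm{P}_{AMOS}$ non-negative, irreducible, column-stochastic and with positive diagonal, hence primitive, I would invoke \cite[Proposition 1]{vogel} exactly as in Proposition~\ref{prop:aos_mos}: the unit column sum yields conservation of the AVG, non-negativity together with primitivity yields positivity of the iterates, and the Perron--Frobenius theorem guarantees that $1$ is a simple dominant eigenvalue whose associated eigenvector is the unique steady state to which \eqref{eq:AMOS} converges.

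The only step requiring a little care is the irreducibility of the average. Here I would argue that, since all summands are entrywise non-negative, no cancellation can occur, so the sparsity pattern of $\bm{P}_{AMOS}$ contains that of either factor; irreducibility of one factor therefore already forces irreducibility of the average. Every other property transfers by linearity from the two individual \eqref{eq:MOS} factors, which is precisely why the result follows essentially trivially once Proposition~\ref{prop:aos_mos} is available.
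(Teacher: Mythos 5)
Your proposal is correct and follows essentially the same route as the paper, which disposes of the corollary in a single sentence by noting that \eqref{eq:AMOS} combines the structure of \eqref{eq:AOS} and \eqref{eq:MOS}: you simply spell out what that combination means, namely that $\bm{P}_{AMOS}=\tfrac{1}{2}\bigl(\bm{P}^-_1\bm{P}^-_2+\bm{P}^-_2\bm{P}^-_1\bigr)$ is an AOS-type average of two MOS-type products, each of which inherits non-negativity, irreducibility, unitary column sum and positive diagonal by the argument of Proposition~\ref{prop:aos_mos}, and that these properties survive the convex combination. Your added care on irreducibility of the average (no cancellation among non-negative summands, so the sparsity pattern of either product is preserved) is a detail the paper leaves implicit, but it does not change the approach.
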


Proposition \ref{prop:aos_mos} and Corollary \ref{cor:amos} guarantee that  \eqref{eq:AOS}, \eqref{eq:MOS} and \eqref{eq:AMOS} iterations preserve the fundamental properties of the continuous image osmosis model \eqref{eq:osmosis}. Compared to the second-order accurate \eqref{eq: peacemanrach} scheme, they are only first-order accurate in time, but no restriction is required on the time-step $\tau$, which makes their use appealing for fast convergence.
Moreover, the \eqref{eq:AOS} and \eqref{eq:AMOS} scheme may be accelerated by code parallelisation on $n$.

\section{Workflow and Numerical Results}\label{sec: numerical}
We test the efficient osmosis models on the ``Monocromo'' wall painting by L. Da Vinci (Castello Sforzesco, Milan, Italy). We start our discussion recalling the acquisition workflow. We refer to \cite{Daffara2015} for more technical details.

\subsection{Creation of the Dataset}
For image acquisition, we used two custom power controlled infrared lamps symmetrically positioned at the sides of a high-performance thermal camera FLIR~X6540sc as in Figure \ref{fig: TQR lighting setup}. 
\begin{figure}
\centering
\includegraphics[width=0.6\textwidth]{./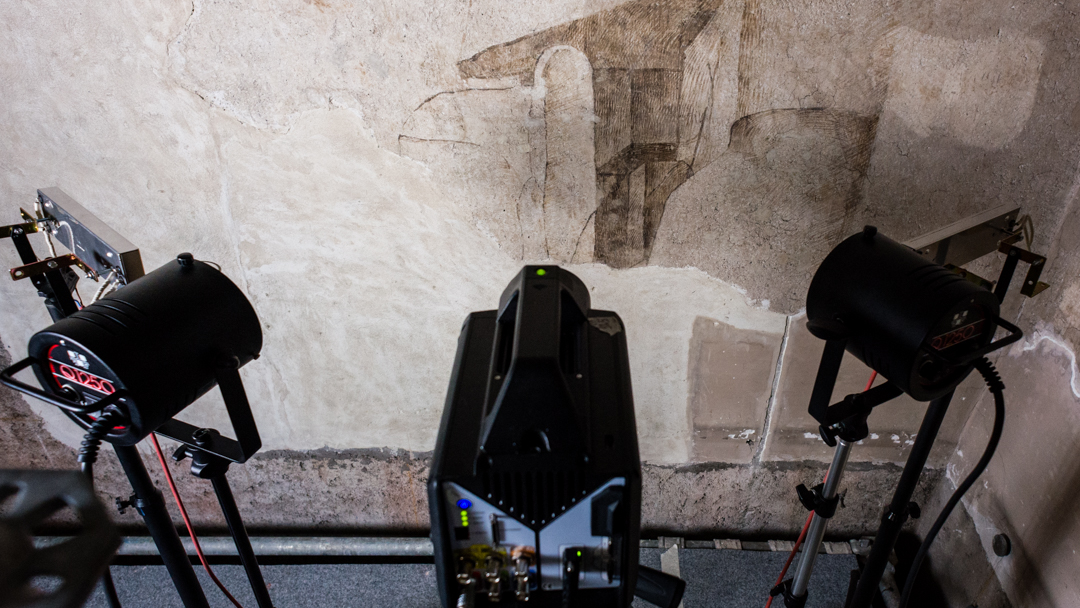}
\caption{Lighting setup: thermal camera paired with infrared lamps.}
\label{fig: TQR lighting setup}
\end{figure}
The emission spectrum of the lamps matches the mid-infrared range and is designed from quartz elements (kanthal alloy filament) and suitable filtering window (sapphire). 
The camera system is equipped with a cooled InSb (indium antimonide) mid-infrared sensitive sensor and an internal filter wheel for selecting thermal bands. Besides high sensitivity and accuracy, this scientific camera allows full access to the raw radiometric data measured by the sensor, (radiance reflected and emitted by the surface). In total, 33 TQR images were sampled to map a section of the wall fresco, see Figure \ref{fig: visible orthophoto}. 

\subsection{Pre-Processing} 

\paragraph{Exporting raw data in Matlab by Python.}
We used the thermal camera as a radiometer: the raw data registered by the sensor  were exported from \texttt{.ptw} and \texttt{.fcf} to Matlab \texttt{.mat} format by means of the Pyradi toolbox \cite{Willers2012}.


\paragraph{Registration with orthophoto.} In order to facilitate the workflow for future work restorations, we manually registered all the TQR images to an orthophoto (provided by \href{http://www.culturanuova.it/}{Culturanova S.r.l.}), obtaining the TQR mosaic in Figure \ref{fig: TQR mosaic}. To overcome camera lens distortions and parallax errors, we used the \emph{Control Point Selection Tool} in MATLAB (\texttt{cpselect.m}) and a the projection \texttt{fitgeotrans.m} part of the \emph{Imaging Processing MATLAB Toolbox}.
\begin{figure}
\centering
\includegraphics[width=0.3\textwidth]{./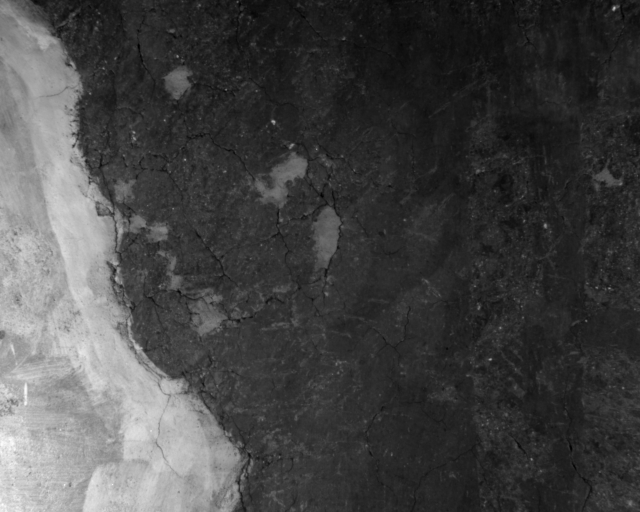}
\hfill
\includegraphics[width=0.3\textwidth]{./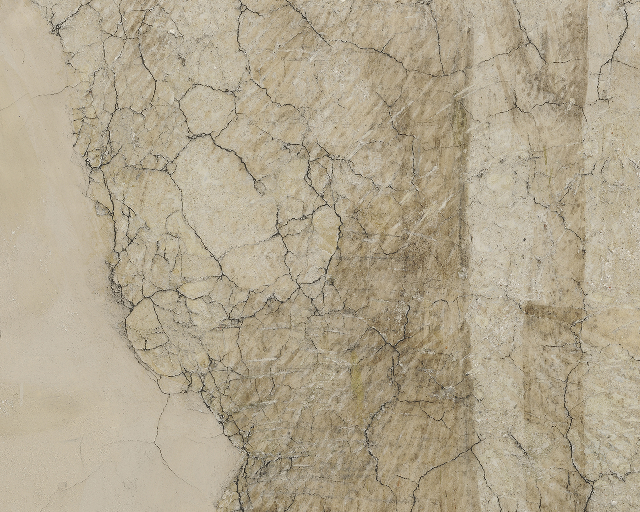}
\hfill
\includegraphics[width=0.3\textwidth]{./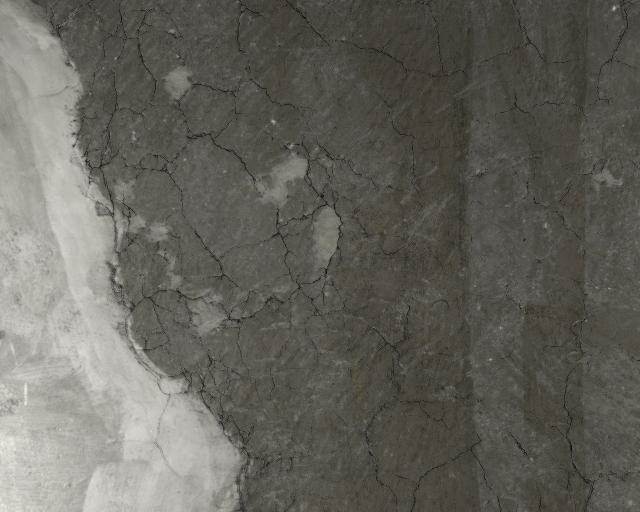}
\caption{Left: TQR; Middle:  Orthophoto aligned with TQR; Right: Alpha fusion.}
  \label{fig:ortho_TQR}
\end{figure}
The large size of the resulting TQR mosaic (4717 $\times$ 7066 pixels) makes the design of unsupervised efficient light correction algorithms challenging. A fast numerical method converging to a light balanced TQR image in a small number of iterations is useful for in-situ study of the composition of the wall and for comparisons with past restorations.

\subsection{Light correction via Efficient Osmosis Filtering}
In order to balance the light differences in Figure \ref{fig:ortho_TQR}, image osmosis filtering can be applied. For an efficient computation, we consider the  \eqref{eq:AOS}, \eqref{eq:MOS} and \eqref{eq:AMOS} splitting schemes. In Figure \ref{fig:mandrill} we study their numerical efficiency and accuracy when used to solve the standard image osmosis problem \eqref{eq:osmosis} for the classical \texttt{mandrill} image. We compare our results with \eqref{eq: peacemanrach} scheme (see \cite{Calatroni2017}) and with a non-split direct implicit solver based on the \texttt{LU} factorization of the penta-diagonal matrix $\bm{A}$. We do not compare here the results obtained via the \texttt{BiCGStab} iterative solver in \cite{weickert} since they highly depend on the tolerance and number of iterations fixed by the user. For efficiency improvements and to avoid memory shortage due to the large size of the problem, a permutation on the tridiagonal matrices $\bm{A_1}$ and $\bm{A_2}$ reducing the band-width to one is used. In Figure \ref{fig: ADI loglog} the  convergence-in-time:  the splitting approaches dramatically reduce the computational times. Note that the second-order accuracy of Peaceman-Rachford scheme \eqref{eq: peacemanrach} is guaranteed only for small step-sizes $\tau$, while first-order accurate \eqref{eq:AOS}, \eqref{eq:MOS} and \eqref{eq:AMOS} schemes allow arbitrarily large time-steps and fast convergence. 

\begin{figure}\centering
\begin{subfigure}{0.5\textwidth}
\centering
\includegraphics[width=1\textwidth]{\detokenize{./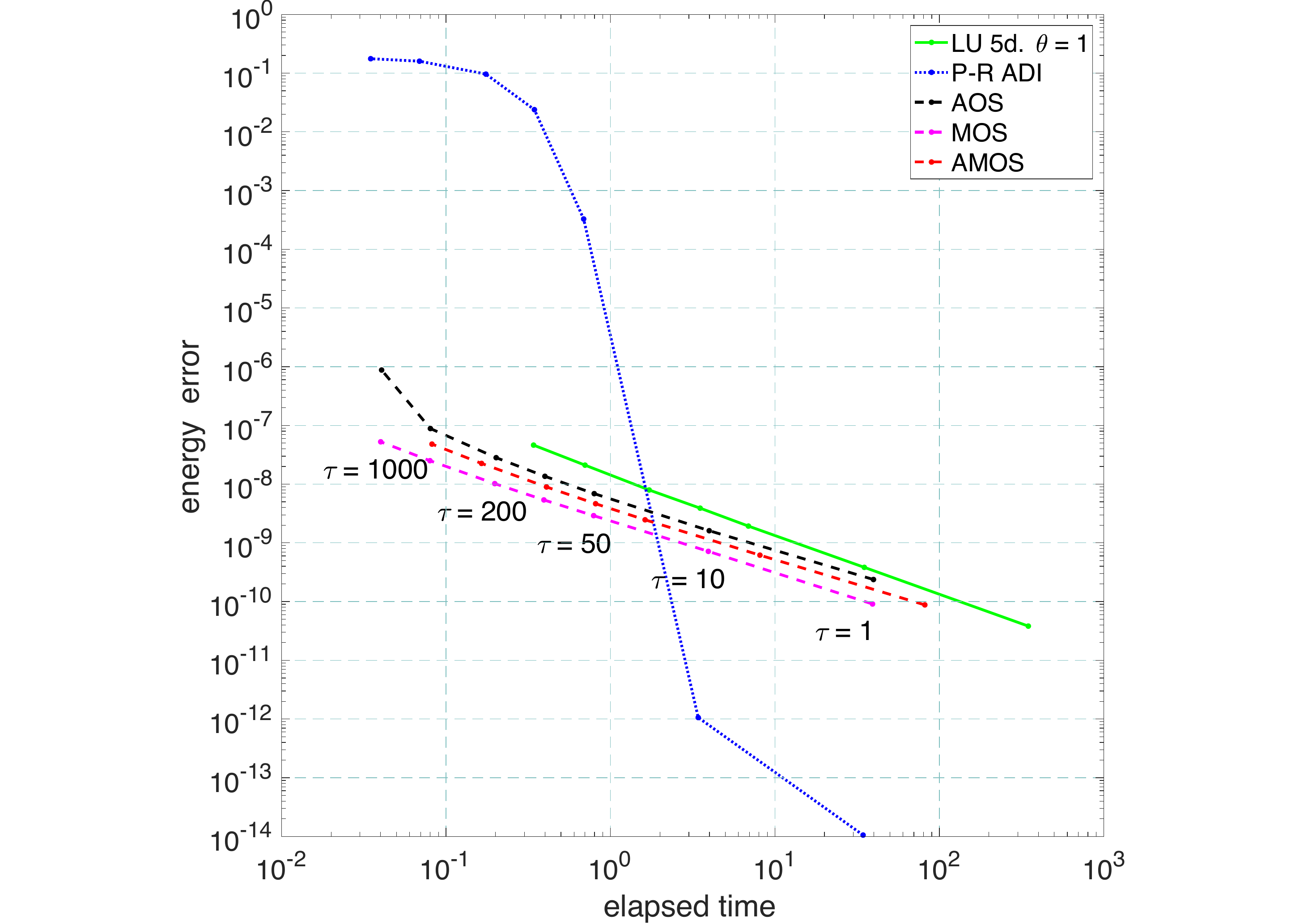}}
\caption{Elapsed Time vs.\ Energy Error.}
\label{fig: ADI loglog}
\end{subfigure}\hfill
\begin{subfigure}{0.45\textwidth}
\centering
\includegraphics[width=0.82\textwidth]{\detokenize{./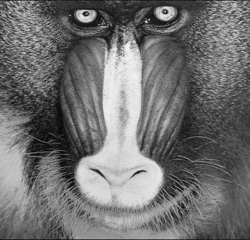}}
\caption{AOS Mandrill, $\tau=1000$.}
\label{fig:mandrill}
\end{subfigure}
\caption{
Numerical efficiency and accuracy of \eqref{eq:AOS}, \eqref{eq:MOS}, \eqref{eq:AMOS} schemes compared with \eqref{eq: peacemanrach} and implicit \texttt{LU} factorisation of penta-diagonal matrix $\bm{A}$ in \eqref{eq:semi_discrete}, see \cite{Calatroni2017}. For Figure \ref{fig: ADI loglog}: $\tau\in \{1,10,50,100,200,500,1000\}, ~T=5000$.}
\end{figure}

In Figure \ref{fig: TQR BEFORE AFTER} we consider the TQR mosaic and report the solution of the light balance problem computed using \eqref{eq: peacemanrach}, \eqref{eq:AOS}, \eqref{eq:MOS} and \eqref{eq:AMOS} schemes with nested \texttt{LU} factorisation of the tridiagonal matrices. The computational times needed to reach an approximation of the steady state at $T=1\mathrm{e}5$ are also reported. The time-step is chosen as $\tau=1000$. Despite the \eqref{eq: peacemanrach} iterations appear to be the most efficient, we observe that the large choice of the time-step $\tau$ badly affects accuracy producing several artefacts in the reconstruction. The \eqref{eq:AOS} and \eqref{eq:MOS} solutions do not show this drawback (see Proposition \ref{prop:aos_mos}) and turn out to be equally efficient for this problem. Similarly, the \eqref{eq:AMOS} scheme does not show any reconstruction artefact, but it suffers from poorer efficiency since it requires the solution of twice the number of linear systems via \texttt{LU} factorisation.

\begin{figure}[t]
\centering
\begin{subfigure}{0.29\textwidth}
\includegraphics[width=1\textwidth]{\detokenize{./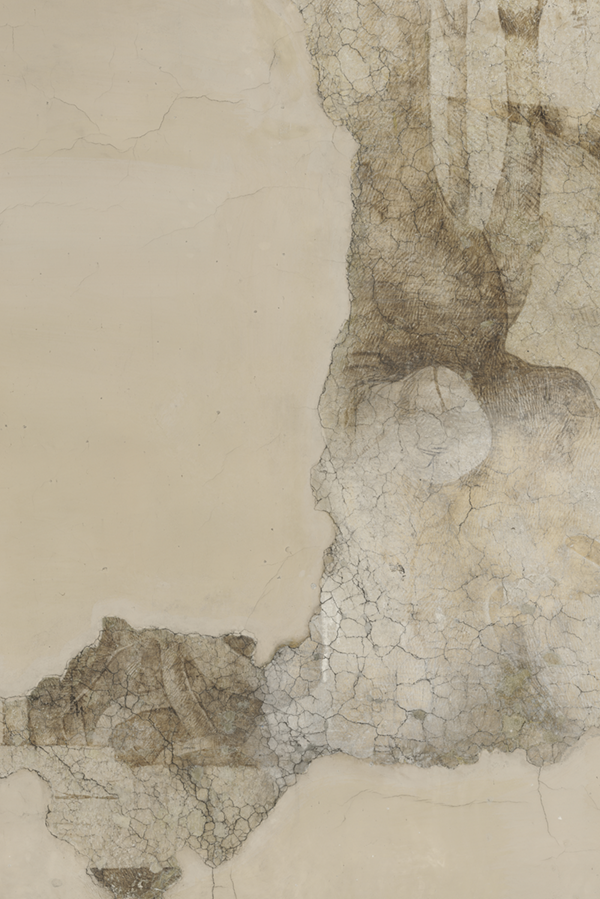}}
\caption{Visible Orthophoto.}
\label{fig: visible orthophoto}
\end{subfigure}
\hfill
\begin{subfigure}{0.29\textwidth}
\includegraphics[width=1\textwidth]{\detokenize{./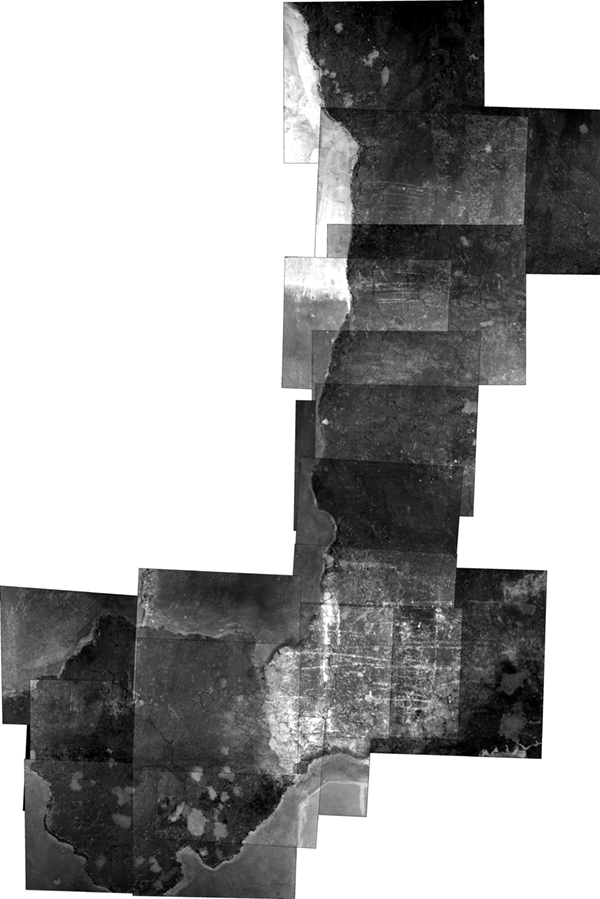}}
\caption{TQR mosaic.}
\label{fig: TQR mosaic}
\end{subfigure}
\hfill
\begin{subfigure}{0.29\textwidth}
\includegraphics[width=1\textwidth]{\detokenize{./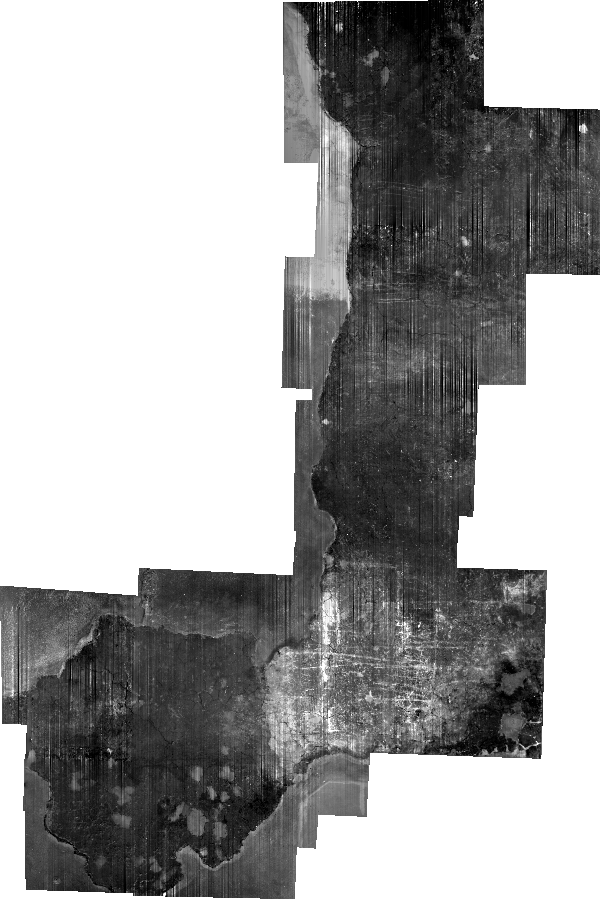}}
\caption{P-R (\SI{526}{\second})}
\label{fig: TQR Peaceman Rachford}
\end{subfigure}
\\
\begin{subfigure}{0.29\textwidth}
\includegraphics[width=1\textwidth]{\detokenize{./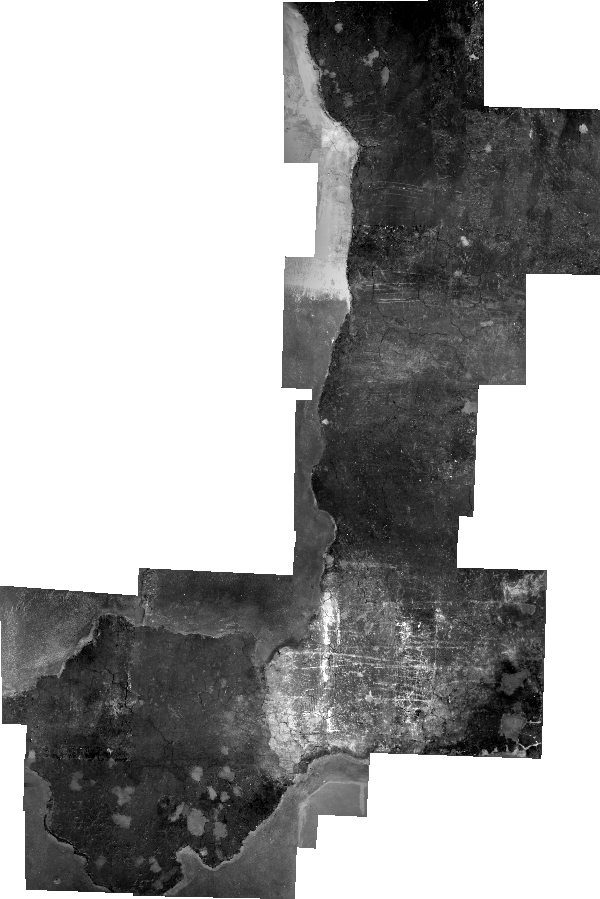}}
\caption{AOS (\SI{629}{\second}).}
\label{fig: TQR AOS}
\end{subfigure}
\hfill
\begin{subfigure}{0.29\textwidth}
\includegraphics[width=1\textwidth]{\detokenize{./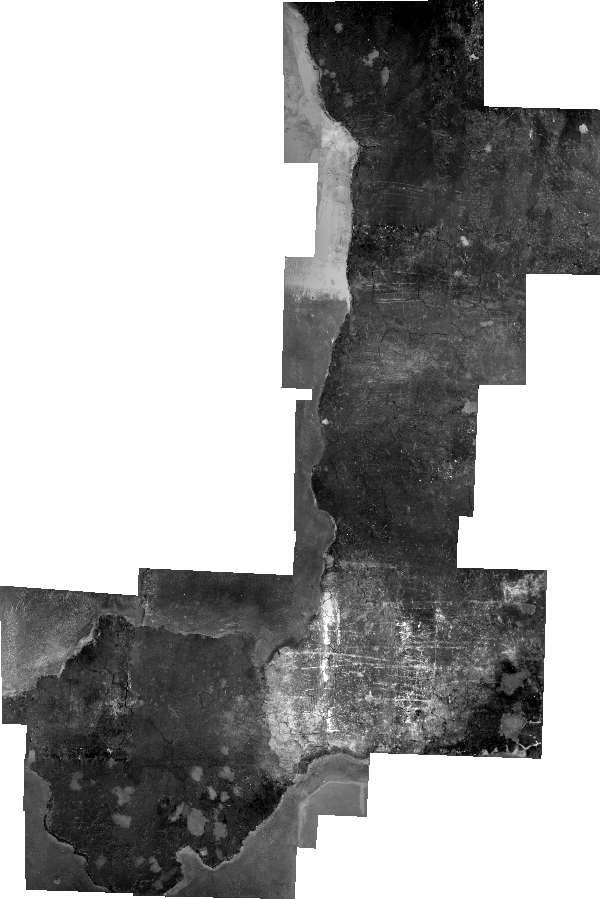}}
\caption{MOS (\SI{629}{\second}).}
\label{fig: TQR MOS}
\end{subfigure}
\hfill
\begin{subfigure}{0.29\textwidth}
\includegraphics[width=1\textwidth]{\detokenize{./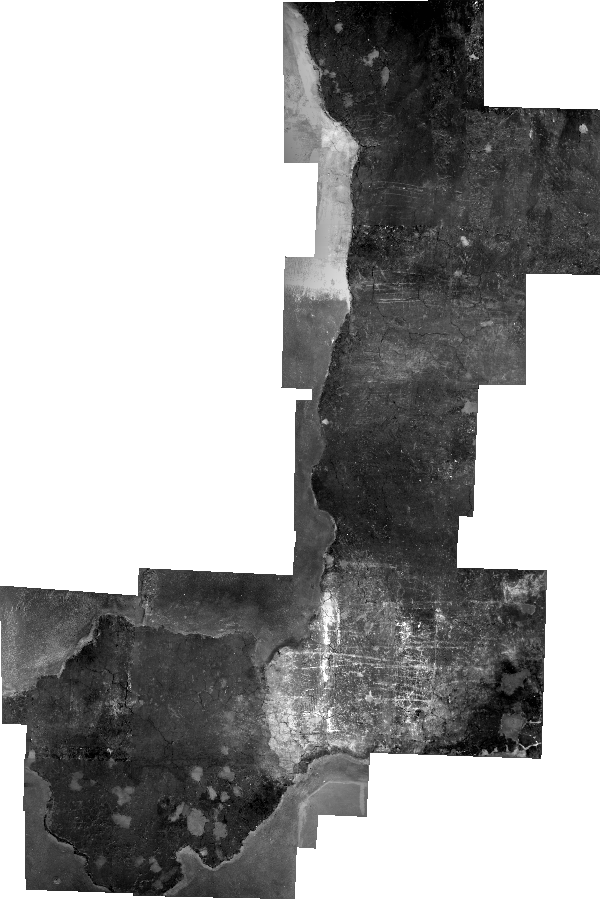}}
\caption{AMOS (\SI{1279}{\second}).}
\label{fig: TQR AMOS}
\end{subfigure}
\caption{TQR light-balance results via osmosis splitting schemes: $T=1\mathrm{e}5$, $\tau=1\mathrm{e}3$.}
\label{fig: TQR BEFORE AFTER}
\end{figure}

\section{Conclusions and Outlook}\label{sec: conclusion}
In this paper we consider an application of the linear image osmosis model proposed in \cite{weickert} as a light balance tool for TQR imaging for cultural heritage. For an efficient computation of their numerical solution, we considered  numerical schemes based on the splitting of the differential operators of the semi-discretised problem along the two spatial dimensions. The resulting fully implicit time-discretised schemes are stable and preserve the standard conservation and convergence properties of the continuous osmosis model for every time-step $\tau>0$. The efficiency and accuracy of the schemes considered make their use suitable for large TQR image data. 

Osmosis filtering may be applied to other diagnostic imaging techniques based on reflectance acquisition mode with either penetrative or non-penetrative wavelengths. Examples of the former techniques are UV reflectance imaging and reflectance transformation imaging (RTI), while an example of the latter technique is near-infrared reflectography for which the diffuse reflectance depends also on the internal scattering across the painting layers.

\subsubsection*{Acknowledgements.} 
SP acknowledges UK EPSRC grant EP/L016516/1 for the University of Cambridge, Cambridge Centre for Analysis DTC.
LC acknowledges the joint ANR/FWF Project \emph{EANOI} FWF n. I1148/ANR-12-IS01-0003.
The diagnostics was supported by Dr.\ Francesca Tasso (\emph{Soprintendenza} of Castello Sforzesco).

\subsubsection*{Data Statement} 
The data leading to this publication are sensitive; restricted access is subjected to the approval of Castello Sforzesco's administration ``Soprintendenza Castello, Musei Archeologici e Musei Storici'', Milan.

\end{document}